\theoremstyle{plain}
\newtheorem{theorem}{Theorem}
\newtheorem{lemma}[theorem]{Lemma}
\newtheorem{proposition}[theorem]{Proposition}
\newtheorem{procedure}[theorem]{Procedure}
\theoremstyle{definition}
\newtheorem{definition}[theorem]{Definition}
\newtheorem{example}[theorem]{Example}
\theoremstyle{remark}
\newtheorem{remark}[theorem]{Remark}
\author{
Mohammad Hadi Shekarriz, Madjid Mirzavaziri\\
Department of Pure Mathematics, \\Ferdowsi University of Mashhad,\\ P.O. Box 1159, Mashhad 91775, Iran\\
mh.shekarriz@mail.um.ac.ir, mirzavaziri@um.ac.ir
\and 
and\\
Kamyar Mirzavaziri\\
National Organization for Development of Exceptional Talents\\
Mashhad, Iran\\
mirzavaziri@gmail.com}
\title{A Characterization For 2-Self-Centered Graphs}
\begin{document}
\maketitle

\begin{abstract}  A Graph is called 2-self-centered if its diameter and radius both equal to 2. In this paper, we begin characterizing these graphs by characterizing edge-maximal 2-self-centered graphs via their complements. Then we split characterizing edge-minimal 2-self-centered graphs into two cases. First, we characterize edge-minimal 2-self-centered graphs without triangles by introducing \emph{specialized bi-independent covering (SBIC)} and a structure named \emph{generalized complete bipartite graph (GCBG)}. Then, we complete characterization by characterizing edge-minimal 2-self-centered graphs with some triangles. Hence, the main characterization is done since a graph is 2-self-centered if and only if it is a spanning subgraph of some edge-maximal 2-self-centered graphs and, at the same time, it is a spanning supergraph of some edge-minimal 2-self-centered graphs.\\
	\textbf{Keywords:} self-centered graphs, specialized bi-independent covering (SBIC), generalized complete bipartite graphs (GCB)\\
\textbf{Mathematics Subject classification:} 05C12, 05C69
\end{abstract}

\section{Introduction}
Let $G=(V,E)$ be a connected finite simple graph. For $u,v\in V$ the \textit{distance} of $u$ and $v$, denoted by $d_G(u,v)$ or $d(u,v)$, is the length of a shortest path between $u$ and $v$. The \textit{eccentricity} of a vertex $v$, ${\rm ecc}(v)$, is $\max\{d(u,v): u\in V\}$. The maximum and minimum eccentricity of vertices of $G$ are called \textit{diameter} and \textit{radius} of $G$ and are denoted by ${\rm diam}(G)$ and ${\rm rad}(G)$, respectively. \emph{Center} of a graph $G$ is the subgraph induced by vertices with eccentricity ${\rm rad}(G)$. A graph is called self-centered if it is equal to its center, or equivalently, its diameter equals its radius.

A graph $G$ is called \textit{$k$-self-centered} if ${\rm diam}(G)={\rm rad}(G)=k$. The terminology \textit{$k$-equi-eccentric graph} is also used by some authors. For studies on these graphs see \cite{Buc, Buc2, Aki, Bal, Neg} and \cite{Kla}.

Clearly, a graph $G$ is 1-self-centered if and only if $G$ is a complete graph. In this paper, we try to characterize 2-self-centered graphs. An edge-maximal 2-self-centered graph can be easily characterized via a condition on its complement. We do this in section 2, using a lemma which gives a necessary and sufficient condition for a graph to be 2-self-centered. For edge-minimal 2-self-centered graphs we need to divide the discussion into two cases: triangle-free or not. We do these in section 3. Then, the characterization is done in sight of the following theorem. 

\begin{theorem}
\label{Char}
A finite graph $G$ is 2-self-centered if and only if there is an edge-minimal 2-self-centered graph $G'$ and an edge-maximal 2-self-centered graph $G''$ such that $G'$ is a spanning subgraph of $G$ while $G$ is itself a spanning subgraph of $G''$.
\end{theorem}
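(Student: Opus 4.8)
The plan is to prove the two implications separately: the ``if'' direction is a direct consequence of the monotonicity of graph distances under addition and deletion of edges, and the ``only if'' direction is a short extremal argument.

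For the ``if'' direction, suppose we are given spanning subgraphs $G'\subseteq G\subseteq G''$ with $G'$ edge-minimal and $G''$ edge-maximal $2$-self-centered. Since $G'$ has finite diameter it is connected, and as it is a spanning subgraph of $G$ the graph $G$ is connected too. Because adding edges cannot lengthen a shortest path, $d_G(u,v)\le d_{G'}(u,v)$ for all $u,v\in V(G)$, hence ${\rm ecc}_G(v)\le {\rm ecc}_{G'}(v)$ for every $v$; taking the maximum over $v$ gives ${\rm diam}(G)\le {\rm diam}(G')=2$, and therefore ${\rm rad}(G)\le {\rm diam}(G)\le 2$. Symmetrically, deleting edges cannot shorten a shortest path, so $d_G(u,v)\ge d_{G''}(u,v)$, hence ${\rm ecc}_G(v)\ge {\rm ecc}_{G''}(v)$ and, taking the minimum over $v$, ${\rm rad}(G)\ge {\rm rad}(G'')=2$, so that ${\rm diam}(G)\ge {\rm rad}(G)\ge 2$. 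Combining these inequalities yields ${\rm rad}(G)={\rm diam}(G)=2$, i.e. $G$ is $2$-self-centered.

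For the ``only if'' direction, assume $G$ is $2$-self-centered. The family of all $2$-self-centered spanning subgraphs of $G$ is nonempty (it contains $G$) and finite, so it has a member $G'$ with the fewest edges; by minimality no edge of $G'$ can be deleted while keeping the graph $2$-self-centered, so $G'$ is an edge-minimal $2$-self-centered graph, and it is a spanning subgraph of $G$ by construction. Dually, the family of $2$-self-centered graphs on the vertex set $V(G)$ that contain $G$ as a spanning subgraph is nonempty and finite (and does not contain $K_{|V(G)|}$, which is $1$- rather than $2$-self-centered); pick a member $G''$ with the most edges. Then no edge can be added to $G''$ without destroying $2$-self-centeredness, so $G''$ is an edge-maximal $2$-self-centered graph, and it contains $G$ as a spanning subgraph. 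This produces the required $G'$ and $G''$.

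I do not expect a genuine obstacle in this argument; the only point needing a line of care is that an edge-extremal member of the family of $2$-self-centered sub-/supergraphs of $G$ is indeed edge-minimal/edge-maximal in the global sense, which holds because any graph obtained from $G'$ by deleting an edge, or from $G''$ by inserting an edge, that remains $2$-self-centered automatically lies again in the same family, contradicting extremality. Everything else is bookkeeping with the elementary fact that if $H$ is a spanning subgraph of $H'$ then $d_{H'}(u,v)\le d_H(u,v)$ for all $u,v$, together with the remark that a $2$-self-centered graph has at least four vertices, so all the families above are formed over a nonempty universe.
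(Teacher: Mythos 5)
Your proof is correct and is, in spirit, the argument the paper intends: distance monotonicity under edge addition/deletion for the ``if'' direction, and an extremal choice among the finitely many $2$-self-centered spanning sub-/supergraphs for the ``only if'' direction. Two remarks. First, you attach each containment to the right bound: $G'\subseteq G$ gives ${\rm diam}(G)\leqslant{\rm diam}(G')=2$ (a spanning supergraph cannot have larger diameter), and $G\subseteq G''$ gives ${\rm rad}(G)\geqslant{\rm rad}(G'')=2$ (a spanning subgraph cannot have smaller radius). The paper's one-line justification pairs them the other way --- it asserts that $G'\subseteq G$ implies ${\rm rad}(G)\geqslant{\rm rad}(G')$ and that $G\subseteq G''$ implies ${\rm diam}(G)\leqslant{\rm diam}(G'')$ --- and both of those implications are false as general monotonicity claims (e.g.\ $C_4\subseteq K_4$ shows a spanning supergraph can have strictly smaller radius); your version is the correct one, and the two conclusions genuinely need the two different hypotheses. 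Second, the paper's proof silently omits the ``only if'' direction altogether; your minimum-/maximum-edge selection, together with the observation that an edge-extremal member of the restricted family is automatically edge-minimal/edge-maximal in the global sense of the paper's definitions, is exactly the content needed there. No gaps.
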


\begin{proof}
The proof is clear. Note that $G'\subseteq G$ implies ${\rm rad}(G)\geqslant{\rm rad}(G')=2$ and $G\subseteq G''$ implies ${\rm diam}(G)\leqslant{\rm diam}(G'')=2$.
\end{proof}

Throughout this paper $G$ is a connected finite simple graph and its \textit{complement} is denoted by $\overline{G}$. If $G$ is a graph and $e$ is an edge in $G$, then $G\setminus e$ is the graph obtained from $G$ by omitting $e$. Moreover, the graph obtained by adding an edge $e\notin E(G)$ to $G$ is denoted by $G+ e$. Whenever two vertices $u$ and $v$ are adjacent, we might write $u \sim v$. For concepts and notations of graph theory, the reader is referred to \cite{Gro}. 

\section{edge-maximal 2-Self-centered Graphs}

In this section, we present a characterization for edge-maximal 2-self-centered graphs. The following Lemma is not only essential to do so, but it is also going to be used all over this paper.

\begin{lemma}\label{2self}
Let $G=(V,E)$ be a graph with $n$ vertices. Then
$G$ is 2-self-centered if and only if the following two conditions are true:
\begin{itemize}
\item[(i)] $2\leqslant \deg(v)\leqslant n-2$ for all $v\in V$;
\item[(ii)] for each $u,v\in V$ with $uv\notin E$ there is a $w\in V$ such that $uw,wv\in E$.
\end{itemize}
\end{lemma}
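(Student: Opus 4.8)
The plan is to unwind the definition of ``2-self-centered'' into a statement about individual vertices and then match its two halves to conditions (i) and (ii). Since ${\rm diam}(G)=\max_{v}{\rm ecc}(v)$ and ${\rm rad}(G)=\min_{v}{\rm ecc}(v)$, the requirement ${\rm diam}(G)={\rm rad}(G)=2$ is equivalent to ${\rm ecc}(v)=2$ for every $v\in V$; and ${\rm ecc}(v)=2$ means precisely that $d(v,u)\leqslant 2$ for all $u$ (with $G$ connected, so that these distances are finite) together with $d(v,u)=2$ for at least one $u$. So I will prove the lemma by showing that ``${\rm ecc}(v)\leqslant 2$ for all $v$ and $G$ is connected'' is equivalent to (ii), that ``${\rm ecc}(v)\geqslant 2$ for all $v$'' corresponds to the bound $\deg(v)\leqslant n-2$ in (i), and that the remaining inequality $\deg(v)\geqslant 2$ comes along as a necessary side condition.

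For the forward direction, assume $G$ is 2-self-centered. Connectedness is automatic since the diameter is finite. Given a non-adjacent pair $u,v$, the equality ${\rm ecc}(u)=2$ forces $d(u,v)\leqslant 2$, and as $uv\notin E$ we get $d(u,v)=2$, i.e.\ a common neighbour $w$ with $uw,wv\in E$; this is (ii). For (i), ${\rm rad}(G)=2$ means no vertex has eccentricity $1$, hence no vertex is adjacent to all others, which gives $\deg(v)\leqslant n-2$. For the lower bound, suppose $\deg(v)\leqslant 1$ for some $v$. If $\deg(v)=0$ then $G$ is disconnected (here $n\geqslant 2$), a contradiction; if $\deg(v)=1$ with unique neighbour $w$, then every vertex other than $v$ lies within distance $2$ of $v$ and hence equals $w$ or is adjacent to $w$, so $\deg(w)=n-1$, making ${\rm ecc}(w)=1$ and contradicting ${\rm rad}(G)=2$. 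Thus $\deg(v)\geqslant 2$.

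For the converse, assume (i) and (ii). First, (ii) immediately yields $d(u,v)\leqslant 2$ for every pair $u,v$ (distance $1$ if adjacent, distance $2$ through the common neighbour otherwise), so $G$ is connected and ${\rm ecc}(v)\leqslant 2$ for all $v$; in particular ${\rm diam}(G)\leqslant 2$. Next, the bound $\deg(v)\leqslant n-2$ in (i) says each $v$ fails to be adjacent to some vertex, so by the previous sentence ${\rm ecc}(v)\geqslant 2$ for all $v$, whence ${\rm rad}(G)\geqslant 2$ and also ${\rm diam}(G)\geqslant 2$. Combining, ${\rm diam}(G)={\rm rad}(G)=2$, so $G$ is 2-self-centered. (The hypothesis $\deg(v)\geqslant 2$ is not actually used in this implication; it is kept in (i) because it is a genuine necessary condition, and in fact it already follows from (ii) together with $\deg(v)\leqslant n-2$, by the same pendant-vertex argument used above.)

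I do not expect a serious obstacle: the whole lemma is a careful translation between the global invariants ${\rm diam}$, ${\rm rad}$ and local degree/adjacency data. The only points needing a moment's care are the trivially small cases (where $G$ cannot be 2-self-centered at all, so the equivalence holds vacuously) and the short argument that a pendant or isolated vertex is incompatible with ${\rm rad}(G)=2$; both are dispatched in a line or two as indicated.
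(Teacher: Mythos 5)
Your proof is correct and follows essentially the same route as the paper's (which merely records the two key observations you elaborate: a vertex of degree $n-1$ forces ${\rm rad}(G)=1$, and a pendant vertex forces its neighbour to be universal). The extra remark that $\deg(v)\geqslant 2$ is redundant in the converse direction is accurate but does not change the argument.
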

\begin{proof}
The proof is obvious. Note that if $G$ has a vertex $v$ with $\deg(v)=n-1$ then ${\rm rad}(G)=1$ and if there is a vertex $u$ with $\deg(u)=1$ then its neighbour should be adjacent to any vertex of $G$, since otherwise ${\rm ecc}(u)>2$.
\end{proof}

\begin{remark}
\label{remark}
If we show that for a graph $G$ item (ii) of Lemma \ref{2self} holds and no vertex is adjacent to all vertices, then we can deduce that no vertex has degree 1 and therefore $G$ is 2-self-centered.
\end{remark}

A 2-self-centered graph $G$ is said to be \emph{edge-maximal} if there are no non-adjacent $u,v \in V(G)$ such that $G + uv$ is 2-self-centered. The following theorem is a characterization for edge-maximal 2-self-centered graphs.

\begin{theorem}
Let $G$ be a 2-self-centered graph. Then $G$ is edge-maximal if and only if $\overline{G}$ is disconnected and each connected component of $\overline{G}$ is a star with at least two vertices. 
\end{theorem}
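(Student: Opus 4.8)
The plan is to prove both directions using Lemma \ref{2self}, translating the conditions on $G$ into conditions on $\overline{G}$. First I would record the dictionary: a vertex $v$ has $\deg_G(v)=n-1$ iff $v$ is isolated in $\overline{G}$, and $\deg_G(v)=1$ iff $\deg_{\overline G}(v)=n-2$; condition (ii) of Lemma \ref{2self} says that for every edge $uv$ of $\overline{G}$ there is a $w$ adjacent in $G$ to both $u$ and $v$, i.e.\ a vertex $w$ that is non-adjacent in $\overline{G}$ to both endpoints of the edge $uv$. Edge-maximality of $G$ means: for every edge $uv\in E(\overline G)$, the graph $G+uv$ fails to be 2-self-centered. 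Since adding $uv$ to $G$ cannot destroy condition (i) unless it creates a vertex of degree $n-1$ (degrees only go up, so the lower bound is safe; the upper bound $\deg\le n-2$ could fail), and cannot destroy condition (ii) (adding edges only helps (ii)), edge-maximality is equivalent to: for every non-edge $uv$ of $G$, adding $uv$ makes $u$ or $v$ have degree $n-1$. Equivalently, every edge $uv$ of $\overline G$ has the property that $u$ or $v$ has degree $1$ in $\overline G$. That last statement is exactly the combinatorial heart of the theorem.

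So the key reduction is: \emph{$G$ (assumed 2-self-centered) is edge-maximal $\iff$ every edge of $\overline G$ is incident to a vertex of degree $1$ in $\overline G$.} I would prove the forward direction of this by contraposition: if some edge $uv$ of $\overline G$ has both endpoints of degree $\ge 2$, I need to check that $G+uv$ is still 2-self-centered, i.e.\ that (i) and (ii) still hold. Condition (ii) is inherited since $G\subseteq G+uv$. For (i), the only worry is that $u$ or $v$ reaches degree $n-1$ in $G+uv$, i.e.\ becomes isolated in $\overline G \setminus uv$ — but that is impossible precisely because each of $u,v$ had degree $\ge 2$ in $\overline G$, so each still has degree $\ge 1$ after removing $uv$. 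Hence $G+uv$ is 2-self-centered, contradicting maximality. Conversely, if every edge of $\overline G$ is incident to a degree-$1$ vertex, then for any non-edge $uv$ of $G$ (edge of $\overline G$), one endpoint, say $u$, has $\deg_{\overline G}(u)=1$, so in $G+uv$ we get $\deg(u)=n-1$, killing condition (i); thus $G+uv$ is not 2-self-centered and $G$ is edge-maximal.

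It remains to identify the graphs $H=\overline G$ satisfying: every edge of $H$ is incident to a vertex of degree $1$, \emph{together with} the constraints coming from $G$ being 2-self-centered in the first place (via Lemma \ref{2self} applied to $G$): no vertex of $H$ is isolated (else $\deg_G=n-1$), no vertex of $H$ has degree $n-2$ (else $\deg_G=1$, but actually the relevant consequence is condition (ii)), and the "(ii)" condition that every edge of $H$ has a common non-neighbour. The claim is that these force $H$ to be a disjoint union of stars $K_{1,m}$ with $m\ge 1$, and that $H$ is disconnected. For the structural part: in any connected graph where every edge touches a leaf, pick a vertex $c$ of maximum degree; every neighbour of $c$ must be a leaf (if a neighbour $x$ had another neighbour, the edge $cx$ would touch no leaf unless $c$ is a leaf, forcing the component to be a single edge), and then $c$ together with its leaves is a whole component — so each component is a star. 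No component can be a single vertex since $H$ has no isolated vertices; hence each star has $\ge 2$ vertices. Finally, $H$ must be disconnected: if $H$ were a single star $K_{1,n-1}$, then $G$ would have an isolated vertex from the center — wait, rather the center of the star has degree $n-1$ in $H$, so degree $0$ in... let me instead use that a single star $K_{1,n-1}$ as $\overline G$ gives $G=K_1 + (n-1)K_1$-type with a universal... I would phrase it as: the leaf-center of the star has $\overline G$-degree $n-1$, impossible, OR the two leaves $u,v$ of the star are non-adjacent in... Actually the clean argument: if $\overline G$ is connected and a star, it has a vertex of degree $n-1$ (the center), so that vertex is isolated in $G$, contradicting that $G$ is connected with $n\ge 2$; alternatively two leaves of the star are adjacent in $G$ but have no common $\overline G$-non-neighbour issue — I'll use the degree argument. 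Thus $\overline G$ is a disconnected union of stars each with $\ge 2$ vertices, and I should double-check that \emph{every} such $\overline G$ does arise, i.e.\ that the corresponding $G$ is genuinely 2-self-centered (verify (i) and (ii) of Lemma \ref{2self} for $G$ when $\overline G$ is such a union — (i): no vertex isolated in $\overline G$ so $\deg_G\le n-2$, and no vertex has $\overline G$-degree $n-1$ since $\overline G$ disconnected with $\ge 2$ components each nonempty, so $\deg_G\ge 1$; (ii): given a non-edge $uv$ of $G$, i.e.\ edge of $\overline G$, it lies in one star component, and any vertex $w$ in a \emph{different} component is $G$-adjacent to both $u,v$). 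I expect the main obstacle to be stating this last verification and the "single star is excluded" step cleanly, since the logical flow must carefully keep separate the hypotheses "$G$ is 2-self-centered" (given) and "$G$ is edge-maximal" — everything else is a routine translation through complementation and Lemma \ref{2self}.
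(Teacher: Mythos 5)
Your proposal is correct and follows essentially the same route as the paper: both hinge on the observation (via Lemma \ref{2self}) that adding a non-edge $uv$ can only destroy 2-self-centeredness by pushing $\deg(u)$ or $\deg(v)$ to $n-1$, i.e.\ that edge-maximality is equivalent to every edge of $\overline{G}$ being incident to a vertex of $\overline{G}$-degree $1$, after which the star structure follows. Your version is organized a bit more cleanly (you isolate that equivalence once rather than re-deriving it twice, and you explicitly rule out $\overline{G}$ being connected, a point the paper leaves implicit), but the underlying argument is the same.
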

\begin{proof}
Let $H_1,\ldots,H_r$ be the connected components of $\overline{G}$, where $r$ is a positive integer. At first, note that each $H_i$ should be a tree with at least two vertices. To see this, if $H_i$ has only one vertex $v$ then the degree of $v$ in $\overline{G}$ is zero and thus its degree should be $n-1$ in $G$ which contradicts to (i) of Lemma \ref{2self}. Furthermore, if the connected component $H_i$ is not a tree then there is an edge $e$ with end vertices $u_0$ and $v_0$ in $H_i$ which is not a cut edge. Let $H=G+ e$. Since $G$ is edge-maximal, $H$ cannot be 2-self-centered. Using Lemma \ref{2self}, we can deduce that the degree of $u_0$ or $v_0$ in $G$ must be $n-2$. This means that the degree of $u_0$ or $v_0$ in $\overline{G}$ is 1 and consequently $e$ is a cut edge, a contradiction.

Now, we show that each connected component $H_i$ is a star. Let $u$ be a vertex with maximum degree $k$ in $H_i$. If $k=1$ then $H_i$ is $K_{1,1}$. Let $k\geqslant2$. If $H_i$ is not $K_{1,k}$ then one of the neighbours of $u$, say $v$, has a neighbour $w\neq u$. Let $e'$ be the edge between $u$ and $v$ in $\overline{G}$ and $H'=G+ e'$. Since $G$ is edge-maximal, $H'$ cannot be 2-self-centered. Using Lemma \ref{2self}, we can again deduce that the degree of $u$ or $v$ in $G$ should be $n-2$. This means that the degree of $u$ or $v$ in $\overline{G}$ is 1; which is a contradiction.

Conversely, suppose that $\overline{G}$ is a disconnected graph whose connected components are all stars, each of which has at least two vertices. Then, $2\leqslant\deg(v)\leqslant n-2$ for all  $v \in V(G)$ and whenever  $u$ and $v$ are two non-adjacent vertices of $G$, there must be a $w \in V(G)$ such that $u$ and $v$ are both adjacent to $w$. Therefore, by Lemma \ref{2self} $G$ is a 2-self-centered graph. Moreover, since every connected component of $\overline{G}$ is a star with at least two vertices, adding an edge between two non-adjacent vertices in $G$ makes the complement to have a singleton as a connected component, which means that the resulted graph is not 2-self-centered. 
\end{proof}

\section{Edge-Minimal 2-Self-centered Graphs}
A 2-self-centered graph $G$ is said to be \emph{edge-minimal} if for each $e \in E(G)$, $G\setminus e$ is not a 2-self-centered graph. In this section, we determine all edge-minimal 2-self-centered graphs. To do so, let at first suppose that $\overline{G}$ is disconnected.
 
\begin{proposition}\label{bipartite}
Let $G$ be a graph. Then $G$ is an edge-minimal 2-self-centered graph such that $\overline{G}$ is disconnected if and only if it is the complete bipartite graph $K_{k,\ell}$ for some $k,\ell\geqslant2$.
\end{proposition}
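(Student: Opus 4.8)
The plan is to prove both directions using Lemma~\ref{2self} together with the structure theorem for edge-maximal graphs (or at least its proof technique), since "$\overline{G}$ disconnected" is exactly the hypothesis that showed up there. For the easy direction, suppose $G = K_{k,\ell}$ with $k,\ell \geqslant 2$, with parts $A$ and $B$. Every vertex has degree $k$ or $\ell$, both of which lie in $[2, k+\ell-2]$ because $k,\ell\geqslant 2$; and any two non-adjacent vertices lie in the same part, so they have a common neighbour in the other part. Hence $G$ is 2-self-centered by Lemma~\ref{2self}. To see edge-minimality, delete an edge $e = ab$ with $a\in A$, $b\in B$: now $a$ and $b$ are non-adjacent, and any common neighbour $w$ would need $wa, wb \in E$, but $wa\in E$ forces $w\in B$ while $wb\in E$ forces $w\in A$ — impossible. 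So $G\setminus e$ violates condition (ii), hence is not 2-self-centered, and $K_{k,\ell}$ is edge-minimal. Finally $\overline{G} = \overline{K_{k,\ell}} = K_k \cup K_\ell$ is disconnected (two components, as $k,\ell\geqslant 1$; and genuinely disconnected since both parts are nonempty).

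For the converse, assume $G$ is an edge-minimal 2-self-centered graph with $\overline{G}$ disconnected; I must show $G$ is complete bipartite. Let the components of $\overline{G}$ be $H_1,\dots,H_r$ with $r\geqslant 2$. The first step is to observe (exactly as in the edge-maximal proof) that no $H_i$ can be a single vertex: a single vertex $v$ would have $\deg_{\overline{G}}(v) = 0$, i.e. $\deg_G(v) = n-1$, contradicting (i) of Lemma~\ref{2self}. So each $H_i$ has at least two vertices. The key step is to show each $H_i$ is a \emph{complete} graph, which by taking complements says $G$ is the join-free disjoint union pattern — precisely, that $G$ is complete multipartite. I would argue by contradiction using edge-minimality: if some $H_i$ is not complete, there exist $u,v$ in the same component $H_i$ with $uv\notin E(\overline{G})$, i.e. $uv\in E(G)$. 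Consider $G\setminus uv$. Since $G$ is edge-minimal, $G\setminus uv$ is not 2-self-centered, so by Lemma~\ref{2self} (and Remark~\ref{remark}, as removing an edge cannot create a degree-$(n-1)$ vertex) either some vertex drops to degree $1$ — meaning $u$ or $v$ had degree $2$ in $G$ — or else $u,v$ themselves have no common neighbour in $G$. The plan is to show the "no common neighbour" alternative forces $u$ and $v$ to lie in different components of $\overline{G}$ (because $u,v$ and a would-be common neighbour interact through non-edges of $G$), contradicting $u,v\in H_i$; and to show the low-degree alternative, combined with $u,v$ in the same $\overline{G}$-component, is also untenable — here I'd trace through the adjacencies of a degree-$2$ vertex against the component structure.

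Once each component $H_i$ of $\overline{G}$ is complete, $G$ is complete multipartite with parts $V(H_1),\dots,V(H_r)$, each of size $\geqslant 2$. The last step is to rule out $r\geqslant 3$: if $G = K_{n_1,\dots,n_r}$ with $r\geqslant 3$ and all $n_i\geqslant 2$, pick an edge $e=uv$ between part $1$ and part $2$; then in $G\setminus e$ the vertices $u$ and $v$ still have a common neighbour (any vertex of part $3$), and all degrees remain $\geqslant 2$ and $\leqslant n-2$, so $G\setminus e$ is still 2-self-centered by Lemma~\ref{2self}, contradicting edge-minimality. Hence $r = 2$ and $G = K_{k,\ell}$ with $k = n_1 \geqslant 2$, $\ell = n_2\geqslant 2$.

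I expect the main obstacle to be the middle step — showing each component of $\overline{G}$ is complete. The delicate part is carefully handling the two ways $G\setminus uv$ can fail to be 2-self-centered and translating each back into a statement about the component structure of $\overline{G}$; in particular the bookkeeping that a common neighbour of $u$ and $v$ in $G$ would have to be non-adjacent (in $\overline{G}$) to neither $u$ nor $v$ in a way consistent with $u,v$ sharing a component needs to be done with some care. The other two steps are short sufficiency/necessity checks via Lemma~\ref{2self}.
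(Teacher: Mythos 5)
Your overall architecture is the same as the paper's: verify $K_{k,\ell}$ directly, show each component $H_i$ of $\overline{G}$ is a clique of $\overline{G}$ (i.e.\ an independent set of $G$) using edge-minimality, then kill $r\geqslant 3$ by deleting a cross edge. The easy direction and the $r\geqslant 3$ step are fine. But the middle step, which you yourself flag as the delicate one, rests on an incomplete case analysis: you assert that $G\setminus uv$ can fail to be 2-self-centered only because (a) $u$ or $v$ drops to degree $1$, or (b) $u,v$ have no common neighbour in $G$. There is a third way: condition (ii) of Lemma~\ref{2self} can fail for a pair $\{u,y\}$ with $y\neq v$ (or $\{v,y\}$ with $y\neq u$) if $v$ (resp.\ $u$) was the \emph{unique} common neighbour of that pair in $G$ — removing $uv$ then destroys the only $2$-path between $u$ and $y$. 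This is not a fanciful worry; it is exactly the ``critical vertex'' phenomenon the paper has to wrestle with in Section~3, and your dichotomy silently assumes it away.

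The repair is the move the paper actually makes: instead of asking which pair witnesses the failure, verify condition (ii) for $G\setminus uv$ wholesale. Any non-adjacent pair $\{u',v'\}$ of $G\setminus uv$ is either a non-edge of $G$ (hence $u',v'$ lie in one component $H_j$ of $\overline{G}$) or is $\{u,v\}\subseteq H_i$; in both cases the pair sits inside a single component, so any vertex $w'$ of a different component is adjacent in $G$ to both, and the edges $u'w',w'v'$ are cross-component edges, hence distinct from the within-component edge $uv$. So (ii) holds in $G\setminus uv$, and by Lemma~\ref{2self} the only possible failure is that $u$ or $v$ has degree $1$ in $G\setminus uv$, i.e.\ degree $2$ in $G$. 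That case, which you deferred, does close: if $\deg_G(u)=2$ with neighbours $v$ and $w$, then the $n-3$ non-neighbours of $u$ all lie in $H_i$ (they are $\overline{G}$-neighbours of $u$), as does $v$, so $H_i\supseteq V\setminus\{w\}$ and $\{w\}$ must be a singleton component of $\overline{G}$, giving $\deg_G(w)=n-1$ and contradicting Lemma~\ref{2self}(i). With these two insertions your proof coincides with the paper's.
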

\begin{proof}
Let $H_1,\ldots,H_r$ be the connected components of $\overline{G}$, where $r\geqslant2$. At first we prove that each $H_i$ is a clique in $\overline{G}$, or in another word, each $H_i$ is an independent set in $G$. Let $e$ be an edge in $G$ between two vertices $u$ and $v$ of $H_i$. If $H=G\setminus e$, then edge minimality of $G$ implies that $H$ cannot be 2-self-centered. 

Let $u'$ and $v'$ be two non-adjacent vertices of $H$. Then $u'$ and $v'$ are belonged to a connected component $H_j$ of $\overline{G}$. Let $w'$ be any vertex of $H_{j'}$, where $j'\neq j$. Thus $u'w',w'v'\in E(H)$. This shows that $H$ satisfies part (ii) of Lemma \ref{2self}.

Since $H$ is not 2-self-centered, Lemma \ref{2self} implies that the degree of $u$ or $v$ in $H$ is 1. Let the degree of $u$ in $H$ be 1. Thus $u$ has a neighbour $w$ in $H$. This implies that all other vertices of $G$ are in $H_i$. We knew that $v$ is also in $H_i$. Thus $H_i$ contains all vertices except $w$ and $w$ is itself a component. Hence, the degree of $w$ in $G$ is $n-1$ which contradicts to Lemma \ref{2self}.

Now we show that $r=2$. Let $r\geqslant3$. Choose $x,y$ and $z$ in three different components. Let $e=xy$ and $H=G\setminus e$. Due to the existence of $z$, $H$ is clearly 2-self-centered which contradicts to the edge-minimality of $G$.

Conversely, the complete bipartite graph $K_{k,\ell}$ for $k,\ell\geqslant2$ is an edge-minimal 2-self-centered graph such that its complement is disconnected. 
\end{proof}

For those 2-self-centered graphs that have connected complements, Proposition \ref{bipartite} is not useful. So, we may develop the characterization in some separate propositions for them, or, we can prove a more general statement which covers this case as a special case. In this paper, we do the later one, for which some preliminaries are needed.

\begin{definition}
Let $G$ be a 2-self-centered graph. A vertex $x$ in $G$ is called \textit{critical for $u$ and $v$} if $uv\notin E$ and $x$ is the only common neighbour of $u$ and $v$.
\end{definition}

\begin{lemma}
\label{tri-free}
Let $G$ be an edge minimal 2-self-centered graph with no critical vertex for any pair of vertices. Then $G$ is triangle-free. Furthermore, every triangle-free 2-self-centered graph is edge-minimal. 
\end{lemma}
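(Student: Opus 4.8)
I would prove the two assertions separately, beginning with the easier converse. For \emph{every triangle-free $2$-self-centered graph is edge-minimal}: let $G$ be triangle-free and $2$-self-centered and fix any $e=uv\in E(G)$. Triangle-freeness forces $u$ and $v$ to have no common neighbour in $G$ (a common neighbour $w$ would complete a triangle $uvw$), hence none in the subgraph $G\setminus e$; since $u$ and $v$ are non-adjacent in $G\setminus e$, condition (ii) of Lemma \ref{2self} fails there, so $G\setminus e$ is not $2$-self-centered. As $e$ was arbitrary, $G$ is edge-minimal.

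For the first assertion I would argue by contradiction: assume $G$ is an edge-minimal $2$-self-centered graph with no critical vertex for any pair, yet $G$ contains a triangle $xyz$. The plan is to show that $G':=G\setminus xy$ is still $2$-self-centered, contradicting edge-minimality, by checking conditions (i) and (ii) of Lemma \ref{2self} for $G'$. For (ii): the non-adjacent pairs of $G'$ are precisely those of $G$ together with $\{x,y\}$. The pair $\{x,y\}$ has the common neighbour $z$ in $G'$. A pair non-adjacent in $G$ and avoiding both $x$ and $y$ keeps all of its common neighbours in $G'$, since the only deleted edge is incident to neither endpoint. For a pair $\{x,v\}$ with $v\notin\{x,y\}$ that is non-adjacent in $G$, its set of common neighbours in $G'$ equals that in $G$ with $y$ deleted if $y$ happened to lie in it; were this set to become empty, $y$ would be the unique common neighbour in $G$ of the non-adjacent pair $x,v$, i.e.\ a critical vertex, contrary to hypothesis. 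The pairs $\{y,v\}$ are handled symmetrically, so (ii) holds in $G'$.

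The crux, and what I expect to be the main obstacle, is condition (i) for $G'$. Its upper bound is immediate because deleting an edge can only decrease degrees. For the lower bound the only endangered vertices are $x$ and $y$, so I must show $\deg_G(x)\ge 3$ and $\deg_G(y)\ge 3$. Suppose toward a contradiction that $\deg_G(x)=2$, so that $N_G(x)=\{y,z\}$; here $n\ge 4$ since no graph on at most three vertices is $2$-self-centered. Then for each vertex $v\notin\{x,y,z\}$ the pair $\{x,v\}$ is non-adjacent, so its set of common neighbours is non-empty (as $G$ is $2$-self-centered) and contained in $N_G(x)=\{y,z\}$; by the no-critical-vertex hypothesis this set has at least two elements, hence equals $\{y,z\}$, and in particular $v\sim y$. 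Therefore $y$ is adjacent to $z$, to $x$, and to every vertex outside $\{x,y,z\}$, i.e.\ $\deg_G(y)=n-1$, contradicting Lemma \ref{2self}(i). The same argument with the roles of $x$ and $y$ exchanged gives $\deg_G(y)\ge 3$. Hence $G'$ satisfies both (i) and (ii) of Lemma \ref{2self}, so $G'$ is $2$-self-centered, contradicting the edge-minimality of $G$; therefore $G$ is triangle-free.
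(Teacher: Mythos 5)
Your proof is correct and follows essentially the same route as the paper: delete an edge of a putative triangle and use the no-critical-vertex hypothesis to supply a second common neighbour for every pair that might lose one, so that condition (ii) of Lemma \ref{2self} survives in $G\setminus xy$; the converse is handled identically. The only divergence is in securing condition (i): the paper shows, using $2$-self-centeredness alone, that at most one vertex of the triangle has degree $2$ and then deletes the edge joining two vertices of degree at least $3$, whereas you delete an arbitrary triangle edge and invoke the no-critical-vertex hypothesis to show a degree-$2$ endpoint would force a vertex of degree $n-1$ --- both arguments are sound.
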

\begin{proof}
Suppose in contrary, that there are $u,v,w \in V(G)$ such that $uv,vw,wu\in E(G)$. If $\deg(u)=\deg(v)=\deg(w)=2$ then $G$ is itself a triangle which contradicts to ${\rm rad}(G)=2$. If $\deg(u)=\deg(v)=2$ then ${\rm diam}(G)=2$ implies that all other vertices of $G$ is a neighbour of $w$. Thus $\deg(w)=n-1$ which contradicts the fact that ${\rm rad}(G)=2$. Hence, at most one of the vertices $u,v$ and $w$ has degree 2. Suppose that $\deg(u)$ and $\deg(v)$ are both greater than 2.

Let $e=uv$ and $H=G\setminus e$. Then edge-minimality of $G$ implies that $H$ is not a 2-self-centered graph. Since $\deg_{G}(u)$ and $\deg_{G}(v)$ are at least 3, this happens only if there are two vertices $x$ and $y$ such that $d_H(x,y)>2$. Since $d_G(x,y)\leqslant2$ it can be deduced that $\{x,y\}\cap\{u,v\}\neq\emptyset$. The cases $x=u$ and $y=v$ cannot happen at the same time because we have the path $x\sim w\sim y$ in $H$. If $x=u$ and $y$ is a vertex other than $v$, then there is a path $xt,ty$ in $G$ for some vertex $t$, since $v$ is not critical for $u$ and $y$. For the case $y=v$ and any other vertex $x$ the argument is similar. Therefore, $H$ is a 2-self-centered graph, a contradiction.

Moreover, let $G$ be triangle-free. If $G$ is not edge-minimal then there is an edge $e$ with ends $u$ and $v$ such that $G\setminus e$ is still a 2-self-centered graph. Thus there is a path of length 2 between $u$ and $v$ in $G\setminus e$. This gives a triangle in $G$. 
\end{proof}

Nevertheless, there are examples of edge-minimal 2-self-centered graphs possessing some critical vertices \textit{with} or \textit{without} triangles. 

\begin{example}
\label{ex1}
Let $G$ be the graph with vertex set $V=\{0,1,2,3,4,5,6,7\}$ and edge set $E=\{01,23,12,14,15,23,36,37,46,57,67\}$. Then $G$ is an edge-minimal 2-self-centered graph possessing the critical vertex $6$ for the vertices $4$ and $7$, \textit{with} a triangle on $3,6,7$, see figure 1.
\end{example}

\begin{figure}[htb]
\begin{center}
\includegraphics[scale=0.65]{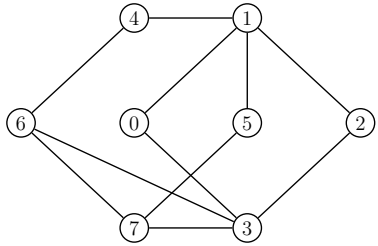}

\caption{ The graph $G$ of Example \ref{ex1}.}
\end{center}
\end{figure}

\begin{example}
\label{ex2}
Let $H$ be a graph constructed in the following way: consider the graph $K_{3,3}$ with two vertices $y$ and $z$ in different parts connected by the edge $e$. Omit $e$ and add a vertex $x$ with two edges $xy$ and $xz$ to obtain a graph $G$. Then $G$ is an edge minimal 2-self-centered graph possessing the critical vertex $x$ for the vertices $y$ and $z$, \textit{without} any triangle.
\end{example}

\begin{figure}[htb]
\begin{center}
\includegraphics[scale=0.65]{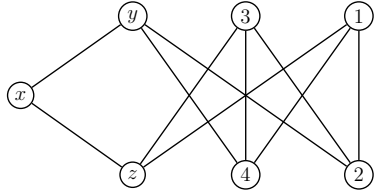}

\caption{The graph $H$ of Example \ref{ex2}.}
\end{center}
\end{figure}

\begin{definition}
\label{GBIC}
A graph $G$ is called to have a \emph{Specialized Bi-Independent Covering via $(\mathbb{A}_r,\mathbb{B}_s)$} if
\begin{itemize}
\item[(i)] $G$ is triangle-free,
\item[(ii)] there are two families $\mathbb{A}_{r}= \{ A_{1},\ldots,A_{r} \}$ and $\mathbb{B}_{s}= \{ B_{1}, \ldots B_{s}\}$ of not necessarily distinct independent subsets of $G$ such that we have $V(G) =\cup_{i=1}^r A_{i}=\cup_{j=1}^s B_{j}$,
\item[(iii)] for all $u,v \in V(G)$ if $d(u,v) \geq 3$ then there is an $1\leq i \leq r$ such that $u,v \in A_{i}$ or there is $1\leq j\leq s$ such that $u,v \in B_{j}$,
\item[(iv)] for all $u \in V(G)$ and $i\in \{ 1, \ldots, r\}$ if $d(u,A_{i})\geq 2$ then there is a $j\in \{ 1, \ldots, s\}$ such that $A_{i} \cap B_{j} = \emptyset$ and $u\in B_{j}$, and
\item[(v)] for all $u \in V(G)$ and $j\in \{ 1, \ldots, s\}$ if $d(u,B_{j})\geq 2$ then there is an $i\in \{ 1, \ldots, r\}$ such that $A_{i} \cap B_{j} = \emptyset$ and $u\in A_{i}$.
\end{itemize}
\end{definition}

To make it easy, we shorten the name ``specialized bi-independent covering'' to \emph{SBIC}. It is straightforward to check that every triangle-free graph $G$ has two families of independent sets $\mathbb{A}_r,\mathbb{B}_s$ such that $G$ has a SBIC via $(\mathbb{A}_{r},\mathbb{B}_{s})$. To see this, fix two independent coverings of $G$, and  by adding enough independent sets to them, we can always satisfy items (iii) to (v) of Definition \ref{GBIC}.

We need the following definition to complete our characterization of triangle-free 2-self-centered graphs.
\begin{definition}
\label{GCB}
A graph $G$ is called an \emph{$X$-generalized complete bipartite}, denoted by ${\rm GCB}_{X}(k,\ell,\mathbb{A}_r,\mathbb{B}_s)$, if $X$ has an SBIC via $(\mathbb{A}_r,\mathbb{B}_s)$ and $G$ is constructed in the following way:
\begin{itemize}
\item[(1)] $V(G)=K \cup L \cup Y \cup Z \cup V(X)$ where $\vert K \vert =k$, $\vert L \vert =\ell$, $Y=\{ y_{1},\ldots, y_{r} \}$ and $Z =\{ z_{1},\ldots, z_{s} \}$.
\item[(2)] $a \sim t$ for all $a\in K$ and $t \in L \cup Y$.
\item[(3)] $b \sim t$ for all $b\in L$ and $t \in K \cup Z$.
\item[(4)] $y_{i} \sim t$ for all $t \in A_{i}$ and $1\leq i\leq r$.
\item[(5)] $z_{j} \sim t$ for all $t \in B_{j}$ and $1\leq j\leq s$.
\item[(6)] $y_{i} \sim z_{j}$ if and only if $A_{i} \cap B_{j} = \emptyset$.
\end{itemize}

Moreover, there are some special cases that must be treated separately:

\begin{itemize}
\item[(7)] If $k=0$ then every member of $Y$ has a neighbour in $Z$ and for all $i,j \in \{ 1, \ldots , r \}$ we have $A_{i} \cap A_{j} \neq \emptyset$ or there is a $p\in \{ 1, \ldots, s\}$ such that $A_{i} \cap B_{p} = A_{j} \cap B_{p} =\emptyset$.

\item[(8)] If $\ell=0$ then every member of $Z$ has a neighbour in $Y$ and for all $i,j \in \{ 1, \ldots , r \}$ we have $A_{i} \cap A_{j} \neq \emptyset$ or there is a $p\in \{ 1, \ldots, s\}$ such that $A_{i} \cap B_{p} = A_{j} \cap B_{p} =\emptyset$.

\item[(9)] If $r= 0$ then $k\neq 0$ and if $s=0$ then $\ell \neq 0$.

\item[(10)] $r=s=0$ if and only if $X=\emptyset$ and $k,\ell \geq 2$.

\item[(11)] If $\vert X \vert =1$ then at least one of $k$ or $\ell$ is non-zero.
\end{itemize}

\end{definition}
\begin{proposition}\label{GBPTriFree}
Any generalized complete bipartite graph is a triangle-free 2-self-centered graph.
\end{proposition}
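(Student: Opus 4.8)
The plan is to verify, for $G:={\rm GCB}_X(k,\ell,\mathbb{A}_r,\mathbb{B}_s)$, the conditions of Lemma \ref{2self} in the form given by Remark \ref{remark}. Since triangle-freeness is already part of the conclusion, it suffices to show in addition that (b) no vertex of $G$ is adjacent to all the others and (c) any two non-adjacent vertices of $G$ have a common neighbour; then (c) makes $G$ connected with ${\rm diam}(G)\leqslant 2$, (b) excludes ${\rm rad}(G)=1$, and Remark \ref{remark} gives ${\rm rad}(G)={\rm diam}(G)=2$.

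Triangle-freeness is a short case check on where the three vertices of a putative triangle lie among $K,L,Y,Z,V(X)$: the classes $K,L,Y,Z$ are independent and, among them, edges occur only between $K$ and $L$, between $K$ and $Y$, between $L$ and $Z$, and between $Y$ and $Z$, so the surviving configurations are a triangle $\{y_i,z_j,t\}$ with $t\in V(X)$ --- impossible, since items (4)--(6) of Definition \ref{GCB} would give $t\in A_i\cap B_j=\emptyset$ --- a triangle $\{y_i,t,t'\}$ or $\{z_j,t,t'\}$ with $t,t'\in V(X)$ --- impossible, since $t,t'$ would then lie in a common independent set $A_i$ or $B_j$ of $X$ yet be $X$-adjacent --- and a triangle inside $V(X)$, impossible since $X$ is triangle-free. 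For (b): the non-neighbours of $a\in K$ contain $(K\setminus\{a\})\cup Z\cup V(X)$, which is empty only if $k=1$ and $X=\emptyset$, a case item (10) forbids; the non-neighbours of $y_i$ contain $L\cup(Y\setminus\{y_i\})$, so only $\ell=0$, $r=1$ needs attention, and there $Z\neq\emptyset$ by item (9), while if $y_1$ were universal then $V(X)=A_1$ and item (8) would force $A_1\cap B_j=\emptyset$ for all $j$, so each $B_j\subseteq A_1$ is empty and $X=\emptyset$, incompatible with $r=1$; the non-neighbours of $t\in V(X)$ contain $K\cup L$, so one may take $k=\ell=0$, and then $|V(X)|\geqslant2$ is impossible (two members of a common $A_i$ would be $X$-adjacent) and $|V(X)|=1$ is forbidden by item (11); the classes $L$ and $Z$ are symmetric.

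For (c) I would exploit the symmetry of the construction exchanging $(K,Y,\mathbb{A}_r,k,r)$ with $(L,Z,\mathbb{B}_s,\ell,s)$, which leaves the following non-adjacent pairs $\{u,v\}$ to treat. A pair inside $K$ has a common neighbour in $L$ or in $Y$, one of which is non-empty by items (7)--(11). A pair $\{a,z_j\}$ with $a\in K$ has a common neighbour in $L$, or, when $\ell=0$, a vertex $y_i$ with $A_i\cap B_j=\emptyset$ supplied by item (8). A pair $\{a,t\}$ with $a\in K$ and $t\in V(X)$ has the common neighbour $y_i$ for any $i$ with $t\in A_i$, which exists since $\bigcup_i A_i=V(X)$. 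A pair $\{y_i,y_{i'}\}$ has a common neighbour in $K$, or, when $k=0$, a common neighbour in $Z$ or in $V(X)$ supplied by item (7). If $y_i\not\sim z_j$ then $A_i\cap B_j\neq\emptyset$ by item (6), and any vertex of $A_i\cap B_j$ is a common neighbour. For $y_i$ and $t\in V(X)\setminus A_i$: if $d_X(t,A_i)=1$ then a vertex of $A_i$ that is $X$-adjacent to $t$ works, and if $d_X(t,A_i)\geqslant2$ then item (iv) of the SBIC yields $j$ with $A_i\cap B_j=\emptyset$ and $t\in B_j$, so $z_j$ works. Finally, for $t,t'\in V(X)$ with $t\not\sim t'$: if $d_X(t,t')=2$ then the midpoint of a length-$2$ path in $X$ works, and if $d_X(t,t')\geqslant3$ then item (iii) of the SBIC puts $t,t'$ in a common $A_i$ or $B_j$, so $y_i$ or $z_j$ works.

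The argument is elementary but heavy on case analysis, and I expect the main obstacle to be the disciplined treatment of the degenerate situations in which some of $k,\ell,r,s$ vanish: the content of those sub-cases is precisely the check that items (7)--(11) are calibrated exactly so as to keep (b) and (c) true --- for instance, item (8) rules out $\ell=0$ together with $r=0$ and $Z\neq\emptyset$, which would otherwise leave $G$ edgeless. A second point to bear in mind is that the distances in items (iii)--(v) of the SBIC are measured in $X$, not in $G$: it is exactly because two vertices of $X$ can be far apart in $X$ yet joined in $G$ through some $y_i$ or $z_j$ that those conditions are formulated the way they are.
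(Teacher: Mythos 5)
Your proof is correct and follows essentially the same route as the paper: show that no vertex is universal and that every non-adjacent pair has a common neighbour (a case analysis over the five vertex classes $K,L,Y,Z,V(X)$, which you compress using the $(K,Y,\mathbb{A}_r)\leftrightarrow(L,Z,\mathbb{B}_s)$ symmetry), invoke Remark \ref{remark}, and check triangle-freeness by a separate case analysis. The only slip is a citation: in the argument that $y_1$ cannot be universal when $\ell=0$ and $r=1$, it is item (6) of Definition \ref{GCB}, not item (8), that converts $y_1\sim z_j$ into $A_1\cap B_j=\emptyset$.
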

\begin{proof}
Let $G= {\rm GCB}_{X}(k,\ell,\mathbb{A}_r,\mathbb{B}_s)$ and $t=|X|$. Then $n:=|V(G)|=k+\ell+r+s+t$. We show that $G$ has no vertex of degree $n-1$ and then we show that item (ii) of Lemma \ref{2self} holds for $G$. Then by Remark \ref{remark} we deduce that $G$ is 2-self-centered. By proving that $G$ has no triangle and using Lemma \ref{tri-free}, we actually show that $G$ is also edge-minimal.  

For $a\in K$, $\deg(a)=\ell+r=n-s-k-t$. Thus if $r=s=0$ then by item (10) of Definition \ref{GCB} we have $k\geq 2$ and hence $\deg(a)\leqslant n-2$. If $r$ or $s$ is non-zero, then by item (10) we have $t \neq 0$ and therefore we have $\deg(a)\leqslant n-2$ (by items (2) and (9) of Definition \ref{GCB}, and because no element of $K$ is adjacent to a vertex of $X$). 

For $b\in L$, by a similar proof to the case $a \in K$ we can deduce that $\deg(b)\leqslant n-2$.

For $y_i\in Y$, if $\ell \neq 0$ then $\deg(y_i)\leqslant n-2$ because no element of $L$ is adjacent to $y_i$. If $\ell = 0$ then either $y_i$ is not adjacent to all vertices of $X$ or if $y_i$ is adjacent to all vertices of $x$ then it is not adjacent to $z_j$ for some $j \in \{ 1, \ldots, s \}$ (which its existence is supported by item (8) of Definition \ref{GCB}), each of which cases that yields to $\deg(y_i)\leqslant n-2$.

For $z_j\in Z$ we have the same argument to $y_i\in Y$. 

Finally, for each $x\in X$, item (11) of Definition \ref{GCB} guarantees that $\deg(x) \leqslant n-2$ whenever $X$ has only one vertex. So, assume that $t \geqslant 2$. Therefore, There are two possibilities: either there is $\hat{x} \in X$ such that $x$ is not adjacent to $\hat{x}$, or, $x$ is adjacent to all other vertices of $X$. If the former case is true then $\deg(x) \leqslant n-2$. For the later case, since $x$ is not in any independent set with other vertices of $X$, we have there is some $i \in \{ 1,\ldots,r \}$ and $j \in \{ 1,\ldots,s \}$ such that $\{ x \} \cap A_{i}=\{ x \} \cap B_{j}=\emptyset$. Thus, by items (4) and (5) of Definition \ref{GCB}, we have $x$ is not adjacent to $y_i$ and $z_j$ and hence $\deg(x) \leqslant n-2$.

To show that (ii) of  Lemma \ref{2self} is also satisfied, we should choose two vertices $u$ and $v$ in $G$ and show that whenever they are not adjacent, they have at least one common neighbour. There are 15 different ways for choosing $u$ and $v$ from $G=K\cup L \cup Y \cup Z \cup X$.

If $(u,v)\in (K\times L)\cup(K\times Y)\cup(L\times Z)$ then $u$ and $v$ are adjacent to each other.

If $(u,v)\in (K\times K)\cup(L\times L)$ then there is a path of length 2 between $u$ and $v$ via one of the sets $L \cup Y$ or $K \cup Z$.

If $(u,v) \in (Y\times Y)$ then if $k \neq 0$ there is a path of length 2 between $u$ and $v$ via any member of $K$. If $k=0$ then, by item (7) of Definition \ref{GCB}, we have either there is a $z_p \in Z$ which is a common neighbour of $u$ and $v$, or, $u$ and $v$ are both adjacent to a vertex $x \in X$. The case $(u,v) \in (Z\times Z)$ is also similar.

If $(u,v) \in (L\times Y)$ then if $k\neq 0$ there is a path of length 2 between $u$ and $v$ via any member of $K$. If $k=0$ then, by item (7) of Definition \ref{GCB}, we have every member of $Y$ has a neighbour in $Z$, so $Z$ is non-empty and $v$ has a neighbour in $Z$, namely $\hat{z}$. Since $u$ is also adjacent to $\hat{z}$ by item (3) of Definition \ref{GCB}, there is a path of length 2 between $u$ and $v$. The case $(u,v) \in (K\times Z)$ is also similar.

If $(u,v)\in Y\times Z$ then $u=y_i$ and $v=z_j$ for some $i$ and $j$. If $A_i\cap B_j\neq\emptyset$ then we can choose a $c$ in $A_i\cap B_j$ such that there is a path of length 2 between $u$ and $v$ via $c$. If $A_i\cap B_j=\emptyset$ then $u$ is adjacent to $v$, by item (6) of Definition \ref{GCB}.

If $(u,v)\in X\times X$ then either $d_{X}(u,v)\leqslant 2$ or by item (iii) of Definition \ref{GBIC} there is an $i \in \{ 1,\ldots,r \}$ or a $j \in \{ 1,\ldots, s \}$ such that both $u$ and $v$ are adjacent to $y_i$ or $z_j$.

If $(u,v)\in (K\times X)\cup (L\times X)$ then there is an $i \in \{ 1,\ldots,r \}$ or a $j \in \{ 1,\ldots, s \}$ such that $v$ is adjacent to $y_i$ and $z_j$. Then, since $u$ is adjacent to $y_i$ or $z_j$, we have $d_{G}(u,v)=2$.

If $(u,v)\in (Y\times X)$ then then there is an $i \in \{ 1,\ldots,r \}$ such that $u=y_i$. Then, either $d(v,A_{i})\leqslant 1$ which means that $d(u,v)\leqslant 2$, or, if $d(v,A_{i})\geqslant 2$ then by item (iv) of Definition \ref{GBIC} there is a $j\in \{ 1, \ldots, s\}$ such that $A_{i} \cap B_{j} = \emptyset$ and $v\in B_{j}$. Hence by items (5) and (6) of Definition \ref{GCB} we have $z_j$ is adjacent to both $u$ and $v$. The case $(u,v)\in (Z\times X)$ is also similar.

So, for each of 15 ways of choosing $u$ and $v$ from vertices of $G$ we have $d(u,v)\leqslant 2$.

We finally show that $G$ is triangle-free. On contrary, suppose that $u,v,w$ are vertices of a triangle in $G$. The case that none of $u,v$ and $w$ is a vertex of $X$ cannot happen because $K \cup Z$ and $L \cup Y$ are independent sets. Since $X$ is triangle-free, $u,v$ and $w$ are not all together vertices of $X$. Meanwhile, if only two vertices of $\{ u,v,w \}$ are in $X$, then the third is not adjacent to the other two because they cannot be in the same independent set in $X$. So, at most one of $\{ u,v,w \}$ is a vertex of $X$. Let for instance $w$ be a vertex of $X$. Then $u$ and $v$ are not members of $Y$ or $Z$ at the same time, because otherwise they are not adjacent together. The case that one of $u$ and $v$ is in $Y$ and the other in $Z$ is also impossible because it is contrary to item (6) of Definition \ref{GCB}.
\end{proof}
\begin{theorem}\label{TriFree2SC}
A graph $G$ is a triangle-free 2-self-centered graph if and only if there are positive integers $k,\ell,r,s$ and a graph $X$ which has a SBIC via $(\mathbb{A}_r,\mathbb{B}_s)$ such that $G={\rm GCB}_{X}(k,\ell,\mathbb{A}_r,\mathbb{B}_s)$.
\end{theorem}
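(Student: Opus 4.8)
The reverse implication --- that every ${\rm GCB}_X(k,\ell,\mathbb{A}_r,\mathbb{B}_s)$ is a triangle-free $2$-self-centered graph --- is exactly Proposition \ref{GBPTriFree}, so all the work lies in the forward direction, and there the plan is to give one uniform construction. Starting from an arbitrary triangle-free $2$-self-centered $G$, I would fix \emph{any} edge $ab$ of $G$ and set $K=\{a\}$, $L=\{b\}$ (so $k=\ell=1$), $Y=N_G(a)\setminus\{b\}=\{y_1,\ldots,y_r\}$, $Z=N_G(b)\setminus\{a\}=\{z_1,\ldots,z_s\}$, let $X$ be the subgraph of $G$ induced on $V(G)\setminus(\{a,b\}\cup Y\cup Z)$, and put $A_i=N_G(y_i)\cap V(X)$, $B_j=N_G(z_j)\cap V(X)$. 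The point of taking $k=\ell=1$ is that the degenerate clauses (7)--(11) of Definition \ref{GCB} then become automatic, so one never has to fuss over them.

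The first block of checks uses only triangle-freeness and Lemma \ref{2self}. An edge inside $Y$ together with $a$, or inside $Z$ together with $b$, would form a triangle, so $Y$ and $Z$ are independent; a common neighbour of $a$ and $b$ together with the edge $ab$ would form a triangle, so $Y\cap Z=\emptyset$; hence $K,L,Y,Z,V(X)$ partition $V(G)$; and $\deg(a),\deg(b)\geq2$ gives $r,s\geq1$, so $k,\ell,r,s$ are positive. From $N_G(a)=\{b\}\cup Y$ and $N_G(b)=\{a\}\cup Z$ one reads off that $a,b$ have no neighbour in $V(X)$ and that $Y$ (resp.\ $Z$) has no neighbour in $L$ (resp.\ $K$); with the definitions of $A_i,B_j$ this gives items (1)--(5) of Definition \ref{GCB}, each $A_i,B_j$ is independent (it lies inside a single neighbourhood), and items (7)--(11) hold because $k=\ell=1\neq0$ and $r,s\geq1$ (clause (10) being fine even when $V(X)=\emptyset$, since $k=\ell=1<2$).

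What remains --- item (6) of Definition \ref{GCB} and the SBIC conditions (ii)--(v) of Definition \ref{GBIC} --- all follow from a single observation: two non-adjacent vertices of $G$ have a common neighbour (diameter $=2$), and one only has to pin down which part of the partition that neighbour lies in. For example, \ref{GBIC}(ii) holds because a vertex $x\in V(X)$ is non-adjacent to $a$, so it has a common neighbour with $a$ inside $N_G(a)=\{b\}\cup Y$, which cannot be $b$ (as $x\not\sim b$), hence is some $y_i$, giving $x\in A_i$; symmetry gives $x\in B_j$. Item \ref{GCB}(6) holds because a vertex in $A_i\cap B_j$ would complete a triangle with $y_i\sim z_j$, while if $A_i\cap B_j=\emptyset$ and $y_i\not\sim z_j$ then $N_G(y_i)$ and $N_G(z_j)$ turn out disjoint, contradicting $d_G(y_i,z_j)=2$. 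And \ref{GBIC}(iv) (and, symmetrically, (iii) and (v)) holds because if $x\in V(X)$ is at distance $\geq2$ in $X$ from $A_i$ then $x\not\sim y_i$, and a common neighbour $w$ of $x$ and $y_i$ cannot be $a$ (which has no neighbour in $V(X)$), cannot lie in $V(X)$ (else $w\in A_i$ and is adjacent to $x$ in $X$), and cannot lie in $Y$ (independent, with $w\sim y_i$), so $w=z_j$, whence $A_i\cap B_j=\emptyset$ and $x\in B_j$ as required. Assembling everything gives ${\rm GCB}_X(1,1,\mathbb{A}_r,\mathbb{B}_s)=G$.

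I expect the main obstacle to be conceptual rather than technical: recognising that the uniform choice $k=\ell=1$ does the whole job (so that Proposition \ref{bipartite} and the degenerate clauses of Definition \ref{GCB} need not be engaged at all), and then being careful and systematic about locating the common neighbour of each non-adjacent pair --- essentially the only place ``$2$-self-centered'' is used, via the common-neighbour property and the minimum-degree-$2$ condition of Lemma \ref{2self} (the latter forcing $r,s\geq1$). The one boundary case worth isolating is $V(X)=\emptyset$ --- for instance when $G=K_{k,\ell}$ --- where all $A_i,B_j$ are empty and the SBIC conditions are vacuous; if one prefers, that case can instead simply be quoted from Proposition \ref{bipartite}.
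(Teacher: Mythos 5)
Your proof is correct, but it is genuinely different from the one in the paper. The paper builds the decomposition from the top down: it takes a maximal independent set $Y'$ of $G$, a maximal independent set $Z'$ of $G\setminus Y'$, lets $X$ be what remains, and then peels off $K\subseteq Z'$ and $L\subseteq Y'$ as the vertices with no neighbour in $X$ (showing each member of $K$ is adjacent to all of $Y'$, etc.), finally setting $A_i=N_X(y_i)$, $B_j=N_X(z_j)$. Your construction instead anchors on a single edge $ab$ and reads the whole ${\rm GCB}$ structure off the first and second neighbourhoods of $a$ and $b$, with $K=\{a\}$, $L=\{b\}$. Both verifications use the same two ingredients (triangle-freeness makes every neighbourhood independent; diameter $2$ supplies the common neighbour, which one then locates in the partition), and in fact your write-up is more explicit than the paper's at exactly the points the paper waves through (``items (iii)--(v) must hold because $G$ is a triangle-free 2-self-centered graph''). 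What your route buys is twofold: the degenerate clauses (7)--(11) of Definition \ref{GCB} are satisfied automatically because $k=\ell=1$ and $\deg(a),\deg(b)\geqslant 2$ force $r,s\geqslant 1$; and, unlike the paper's own construction, yours literally delivers the \emph{positive} integers $k,\ell,r,s$ promised in the theorem statement (for $G=K_{k,\ell}$ the paper's decomposition yields $r=s=0$, matching the remark after the theorem but not its wording). What the paper's route buys is a more symmetric, ``canonical'' decomposition in which $K$ and $L$ absorb the full complete-bipartite core, directly extending Proposition \ref{bipartite}. The one point worth tightening in your write-up is the verification of \ref{GBIC}(iii), which is not literally ``symmetric'' to (iv) but follows by the same neighbour-location argument applied to a pair $u,v\in V(X)$ with $d_X(u,v)\geqslant 3$.
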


\begin{proof}
Let $Y'$ be a maximal independent subset of $G$, let $Z'$ be a maximal independent subset of $G\setminus Y'$ and let $X=G\setminus(Y'\cup Z')$. Suppose that $K$ (resp. $L$) is the set of all vertices in $Z'$ (resp. $Y'$) which are not adjacent to any member of $X$ and put $Y=Y'\setminus L, Z=Z'\setminus K$. 

Let $a\in K$ and $y'\in Y'$. We claim that $ay'\in E$. Suppose on the contrary that $ay'\notin E$. Since ${\rm diam}(G)=2$ there is a $u$ in $G$ such that $au,uy'\in E$. The vertex $u$ cannot be in $Y'$ or $Z'$ since $Y'$ and $Z'$ are independent sets. Hence $u\in X$. This contradicts to the definition of $K$. 

A similar argument shows that each member of $L$ is adjacent to each member of $Z'$. 

Let $k=|K|, \ell=|L|, r=|Y|, s=|Z|, Y=\{y_1,\ldots,y_r\}$ and $Z=\{z_1,\ldots,z_s\}$. Now put $A_i=N_{X}(y_i)$ and $B_j=N_{X}(z_j)$. We show that $A_i$'s and $B_j$'s are independent subsets of $X$ and $X$ has a SBIC via $(\mathbb{A}_r,\mathbb{B}_s)$.

Let $x$ be an arbitrary member of $X$. Since $Y'$ and $Z'$ are maximal independent, there should be neighbours for $x$ in $Y'$ and $Z'$. We know that these neighbours are in $Y$ and $Z$. Let $y_i$ and $z_j$ be adjacent to $x$. Thus $x\in A_i$ and $x\in B_j$. This shows that $X=\cup_{i=1}^r A_i=\cup_{j=1}^s B_j$. 

Each $A_i$ and each $B_j$ is independent, since $G$ is triangle-free. Moreover, if $y_i$ and $z_j$ are not adjacent to each other, then since ${\rm diam}(G)=2$, there should be an $x\in X$ with $y_ix,xz_j\in E(X)$. Thus $x\in A_i\cap B_j$. If $y_i$ is adjacent to $z_j$ then there must not be such an $x$, so we have $y_{i} \sim z_{j}$ if and only if $A_{i} \cap B_{j} = \emptyset$.

Furthermore, $X$ is triangle-free since $X$ is a subgrpah of the triangle-free graph $G$.

Items (iii), (iv) and (v) of Definition \ref{GBIC} must holds because $G$ is a triangle-free 2-self-centered graph. Therefore $X$ has an SBIC via $(\mathbb{A}_r,\mathbb{B}_s)$.

Items (1) to (6) of Definition \ref{GCB} have already hold. Moreover, items (7) to (11) of Definition \ref{GCB} must also hold because $G$ is a triangle-free 2-self-centered graph. Hence $G = {\rm GCB}_{X}(k,\ell,\mathbb{A}_r,\mathbb{B}_s)$.

Since the converse is evident by Proposition \ref{GBPTriFree}, we are done with the proof.
\end{proof}

The reader should note that every complete bipartite graph $K_{k,\ell}$ with $k,\ell \geqslant 2$ is a generalized complete bipartite graph ${\rm GCB}_{\emptyset}(k,\ell,\emptyset,\emptyset)$.

Now, we can consider edge-minimal 2-self-centerd graphs with some triangles. We need the following procedure to proceed.

\begin{procedure}
\label{star}
Let $G$ be a graph, $u,v,w$ form a triangle in $G$ and suppose that $v$ is a critical vertex for $u$ and $v_{1}, \ldots, v_{p}$ and/or $u$ is a critical vertex for $v$ and $u_{1}, \ldots, u_{q}$. Remove the edge $uv$ and add edges $uv_{1}, \ldots, uv_{p}$ and $vu_{1}, \ldots, vu_{q}$.
\end{procedure}

The following theorem characterizes edge-minimal 2-self-centered graphs with triangles, which completes the characterization of all 2-self-centered graphs.

\begin{theorem}
Let $G$ be a graph. Then $G$ is an edge-minimal 2-self-centered graph with some triangle if and only if the following two conditions are true:
\begin{itemize}
\item[(i)] for each edge of every triangle in $G$, at least one end-vertex is a critical vertex (for the other end-vertex of that edge and some other vertices of $G$), and
\item[(ii)] iteration of Procedure \ref{star} on $G$ (at most to the number of triangles of $G$) transforms $G$ to a triangle-free 2-self-centered graph.
\end{itemize}
\end{theorem}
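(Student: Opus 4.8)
The plan is to reduce both directions to Lemma \ref{2self} together with one auxiliary fact: if the hypotheses of Procedure \ref{star} hold for a $2$-self-centered graph $G$, then the output $G'=G\setminus uv+uv_{1}+\cdots+uv_{p}+vu_{1}+\cdots+vu_{q}$ is again $2$-self-centered; conversely, if $G'$ is $2$-self-centered and $G$ has no vertex of degree $n-1$, then $G$ is $2$-self-centered. I would prove this by checking the two items of Lemma \ref{2self} for $G'$. For the degree bounds: $u$ and $v$ lose only their mutual edge, so both stay $\le n-2$; a vertex $v_i$ satisfies $N_G(u)\cap N_G(v_i)=\{v\}$ (this is what ``$v$ is critical for $u$ and $v_i$'' means), hence misses the triangle vertex $w\in N_G(u)$ both in $G$ and in $G'$, so $\deg_{G'}(v_i)\le n-2$, and similarly for each $u_j$; no degree drops below $2$ because the triangle already gives $\deg_G(u),\deg_G(v)\ge 2$ and, as in the proof of Lemma \ref{tri-free}, $\deg_G(u)=2$ forces $p\ge 1$. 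For item (ii) of Lemma \ref{2self}: deleting $uv$ can cost a common neighbour only to a pair $\{v,y\}$ (namely $u$) or $\{u,y\}$ (namely $v$), and costs the \emph{only} one exactly when $v$, resp.\ $u$, is a critical vertex — but those are precisely the pairs $\{v,u_j\}$ and $\{u,v_i\}$ repaired by the new edges of Procedure \ref{star}, while every other non-edge keeps a common neighbour untouched. The converse is verified the same way. Note also that a vertex of degree $n-1$ lies in $N(x)\cap N(y)$ for all $x,y$, so an edge of a triangle avoiding it can have no critical endpoint; hence condition (i) of the theorem, for a graph containing a triangle, already rules out a vertex of degree $n-1$.

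For the forward implication I would first establish (i). Fix a triangle $u,v,w$ and one of its edges, say $uv$; edge-minimality makes $G\setminus uv$ not $2$-self-centered, yet $w$ is still a common neighbour of $u$ and $v$ and only $\deg(u),\deg(v)$ change, so by Lemma \ref{2self} the obstruction is either $\deg_G(u)=2$ or $\deg_G(v)=2$, or a pair $\{v,y\}$ (resp.\ $\{u,y\}$) whose unique common neighbour $u$ (resp.\ $v$) is destroyed — so one endpoint of $uv$ is a critical vertex for the other endpoint and $y$. In the degree-$2$ cases one runs the argument of Lemma \ref{tri-free}: forcing every non-neighbour of the degree-$2$ vertex onto $w$ makes $\deg(w)=n-1$, against ${\rm rad}(G)=2$, so a critical vertex appears anyway. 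This gives (i). For (ii), by the auxiliary fact every application of Procedure \ref{star} stays inside the $2$-self-centered graphs; so it is enough to show that at each stage some triangle edge can be processed without creating a new triangle, for then the number of triangles strictly decreases and, after at most (the number of triangles of $G$) steps, the process halts at a triangle-free $2$-self-centered graph. I expect this selection to be the main obstacle: if processing $uv$ created a new triangle, it would be a triangle on $\{u,v_i,v_k\}$ with $v_i\sim v_k$, but then $\{v,v_i,v_k\}$ is already a triangle of the current graph (both $v_i,v_k\in N(v)$), so one can descend to the edge $v_iv_k$ instead; with the $u_j$-side and the cross terms analysed the same way, a minimality argument over triangle edges in the finite graph should produce an edge whose processing introduces nothing new.

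For the reverse implication, assume (i), (ii) and that $G$ contains a triangle. By (i), $G$ has no vertex of degree $n-1$, and this persists along the iteration of (ii): each graph before the (triangle-free) final one still has a triangle, which keeps every vertex altered by a step below degree $n-1$ while the remaining degrees are unchanged. Hence, walking the iteration backwards from its $2$-self-centered endpoint and applying the converse half of the auxiliary fact at each step, $G$ itself is $2$-self-centered. Finally $G$ is edge-minimal: if $e=xy$ lies in no triangle then $x$ and $y$ have no common neighbour, so $d_{G\setminus e}(x,y)\ge 3$; if $e$ lies in a triangle then, by (i), one endpoint is critical for the other endpoint and some $y$, and deleting $e$ destroys that pair's only common neighbour, so again the distance becomes $\ge 3$. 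Either way $G\setminus e$ is not $2$-self-centered, so $G$ is edge-minimal, and together with the triangle it contains this is exactly the assertion that $G$ is an edge-minimal $2$-self-centered graph with some triangle.
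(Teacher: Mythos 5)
Your overall route is the paper's own: verify via Lemma \ref{2self} that Procedure \ref{star} preserves $2$-self-centeredness, extract condition (i) from edge-minimality, obtain condition (ii) by iterating the procedure, and read the converse off the two conditions. Several of your detailed checks are correct and in fact more careful than what the paper writes (that each $v_i$ still misses the third triangle vertex $w$, that $\deg_G(u)=2$ forces $p\geqslant 1$, that condition (i) together with the presence of a triangle already excludes a vertex of degree $n-1$). The genuine gap is exactly the step you flag as ``the main obstacle,'' and your descent does not close it. Processing $uv$ can create the triangle $\{v,u_i,u_k\}$ out of the \emph{two} new edges $vu_i,vu_k$ and an old edge $u_iu_k$, and then the number of triangles need not decrease. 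This is not hypothetical: take $V=\{u,v,w,a,b,c,d,e\}$ with edges $uv,uw,vw,ua,ub,ab,ac,bc,wc,wd,cd,we,ae$. One checks this is an edge-minimal $2$-self-centered graph with triangles $uvw$, $uab$, $abc$, $wcd$; here $u$ is the unique common neighbour of $v$ and $a$ and of $v$ and $b$, so processing $uv$ deletes $uvw$ but creates $vab$, leaving the triangle count at four. (The paper's proof asserts that no new triangle appears and only examines triangles containing a single new edge, so it has the same hole; your worry is well founded.) Your proposed fix --- descend from $uv$ to the edge $v_iv_k$ of the pre-existing triangle $\{v,v_i,v_k\}$ --- happens to work in this example, but you exhibit no decreasing quantity and no argument that the descent ever reaches an edge whose processing creates nothing new; ``a minimality argument should produce such an edge'' is precisely the missing content of condition (ii), including the bound ``at most the number of triangles of $G$.''

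A second gap: the ``converse half'' of your auxiliary fact is not verified ``the same way'' and is false as a standalone implication. If $G'$ is obtained from $G$ by the procedure and is $2$-self-centered, a non-edge $\{v_i,y\}$ of $G$ (with $y\notin\{u,v\}$) may have $u$ as its \emph{only} common neighbour in $G'$, and that common neighbour exists only through the new edge $uv_i$; back in $G$ the pair then has no common neighbour at all, so $G$ fails item (ii) of Lemma \ref{2self} even though $G'$ satisfies it. Hence walking the iteration of condition (ii) backwards does not by itself deliver that $G$ is $2$-self-centered; you need an extra argument ruling out such pairs at every step (the paper offers none either, simply declaring that condition (ii) guarantees $2$-self-centeredness). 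Your closing derivation of edge-minimality from condition (i) is fine, but it presupposes exactly the statement ``$G$ is $2$-self-centered'' that the backward walk was supposed to establish.
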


\begin{proof}
Assume that $u,v,w$ form a triangle in $G$. Since $G$ is edge-minimal, if we omit the edge $uv$ then the resulting graph is not 2-self-centered. This shows that $u$ or $v$ is a critical vertex. Let $u$ be a critical vertex. Thus there are vertices $u_1,\ldots, u_q$ such that $u$ is the common neighbour of $v$ and each of the $u_i$'s. Moreover, if $v$ is also a critical vertex for $u$ and some other vertices, then we suppose that $v_1,\ldots,v_p$ are the vertices such that $v$ is a common neighbour of $u$ and each of the $v_j$'s.

If we omit $uv$ and add edges $u_1{}v,\ldots,u_{q}v,uv_{1},\ldots,uv_{p}$ then the resulting graph $G'$ is clearly 2-self-centered and the number of triangles of $G'$ is less than the number of triangles of $G$. To see this, note that edges of a triangle on $u,v$ and $w$ are omitted and no new triangle is added. In contrary, suppose that we have a new triangle. Then it should be of the form $u_i,v,t$ (or $v_j,u,s$) which contradicts to the fact that $u$ (or $v$) is a critical vertex for $u_i$ and $v$ (for $v_j$ and $u$).

If $G$ has still some triangle then we can proceed this process. Therefore, we finally transform $G$ into a triangle-free 2-self-centered graph.

Conversely, if the two conditions are true for a graph $G$ with some triangles, then $G$ is an edge-minimal 2-self-centered graph because condition (ii) guarantees that $G$ is 2-self-centered while condition (i) obligates $G$ to be edge-minimal.
\end{proof}

\begin{flushright}
Received 30 November 2015\\
Revised 24 August 2016\\
Accepted 1 September 2016
\end{flushright}

\end{document}